\documentclass{siamart1116}
\usepackage{amsfonts}
\usepackage{accents}
\usepackage[margin=1in, letterpaper]{geometry}
\usepackage{graphicx}
\usepackage{bmpsize}
\usepackage{subcaption}
\usepackage{float}
\usepackage{epsfig}
\usepackage{multicol}
\usepackage{cite}
\usepackage{adjustbox,lipsum}

\newcommand{\vertiii}[1]{{\left\vert\kern-0.25ex\left\vert\kern-0.25ex\left\vert #1\right\vert\kern-0.25ex\right\vert\kern-0.25ex\right\vert}}
\begin{document}

\title{A Discrete Hopf Interpolant and Stability of the Finite Element Method for Natural Convection}
\author{J. A. Fiordilino\footnote{University of Pittsburgh, Department of Mathematics, Pittsburgh, PA 15260} and A. Pakzad\footnotemark[1]}
\date{Updated: 7/11/17}
\maketitle





\begin{abstract}
The temperature in natural convection problems is, under mild data assumptions, uniformly bounded in time. This property has not yet been proven for the standard finite element method (FEM) approximation of natural convection problems with nonhomogeneous partitioned Dirichlet boundary conditions, e.g., the differentially heated vertical wall and Rayleigh-B\'{e}nard problems.  For these problems, only stability in time, allowing for possible exponential growth of $\| T^{n}_{h} \| $, has been proven using Gronwall's inequality.  Herein, we prove that the temperature approximation can grow at most linearly in time provided that the first mesh line in the finite element mesh is within $\mathcal{O} (Ra^{-1})$ of the nonhomogeneous Dirichlet boundary.
\end{abstract}


\section{Introduction}
Natural convection of a fluid driven by heating a side wall or the bottom wall is a classic problem in fluid mechanics that is still of technological and scientific importance.  The temperature in this problem is uniformly bounded in time ($\| T(t) \| \leq C < \infty$) under mild data assumptions.  However, when this often analyzed problem is approximated by standard FEM, all available stability bounds, e.g. \cite{Si,Si2,Wu}, for the temperature exhibit exponential growth in time unless the heat transfer through the solid container is included in the model, e.g. \cite{Boland}.  Moreover, even in the stationary case, stability estimates can yield extremely restrictive mesh conditions ($h= \mathcal{O}(Ra^{-30/(6-d)}))$, e.g. \cite{Colm}.

In this paper, we prove that, without the aforementioned restrictions, the temperature approximation is bounded sub-linearly in terms of the simulation time $t^{\ast}$ provided that the first mesh line in the finite element mesh is within $\mathcal{O} (Ra^{-1})$ of the heated wall; that is, $\|T^{n}_{h}\| \leq C\sqrt{t^{\ast}}$.  In practice, numerical simulations are carried out on a graded mesh \cite{Christon,Ince,Manzari,Massarotti} due to the interaction between the boundary layer, which is $\mathcal{O} (Ra^{-1/4})$ in the laminar regime \cite{Gill}, and the core flow.  In particular, several mesh points are placed within the boundary layer, which encompasses the internal core flow.  Although our condition is more restrictive, this may be due to a gap in the analysis and, none-the-less, it is indicative of the value of graded meshes for stability as well as accuracy.

Consider natural convection within an enclosed cavity.  Let $\Omega \subset \mathbb{R}^d$ (d=2,3) be a convex polyhedral domain with boundary $\partial \Omega$.  The boundary is partitioned such that $\partial \Omega = \overline{\Gamma_{1}}  \cup \overline{\Gamma_{2}}$ with $\Gamma_{1} \cap \Gamma_{2} =\emptyset$ and $|\Gamma_H \cup \Gamma_N | = |\Gamma_{1}| > 0$.  Given $u(x,0) = u^{0}(x)$ and $T(x,0) = T^{0}(x)$, let $u(x,t):\Omega \times (0,t^{\ast}] \rightarrow \mathbb{R}^{d}$, $p(x,t):\Omega \times (0,t^{\ast}] \rightarrow \mathbb{R}$, and $T(x,t):\Omega \times (0,t^{\ast}] \rightarrow \mathbb{R}$ satisfy
\begin{align}
	u_{t} + u \cdot \nabla u -Pr \Delta u + \nabla p &= PrRa\xi T + f \; \; in \; \Omega, \label{ndeq1u} \\
	\nabla \cdot u &= 0 \; \; in \; \Omega,  \\
	T_{t} + u \cdot \nabla T - \Delta T &= \gamma \; \; in \; \Omega, \label{ndeq1T} \\
	u = 0 \; \; on \; \partial \Omega,  \; \; \;
	T = 1 \; \; on \; \Gamma_{N}, \; \; T = 0 \; \; on \; \Gamma_{H}, \; \; \;
	n \cdot \nabla T &= 0 \; \; on \; \Gamma_{2}. \label{ndeq1bc}
\end{align}
\noindent Here $n$ denotes the usual outward normal, $\xi = g/|g|$ denotes the unit vector in the direction of gravity, $Pr$ is the Prandtl number, and $Ra$ is the Rayleigh number.  Further, $f$ and $\gamma$ are the body force and heat source, respectively.

In Sections 2 and 3, we collect necessary mathematical tools and present common numerical schemes.  In Section 4, the major results are proven.  In particular, it is shown that provided the first mesh line in the finite element mesh is within $\mathcal{O} (Ra^{-1})$ of the heated wall, then the computed velocity, pressure, and temperature are stable allowing for sub-linear growth in $t^{\ast}$ (Theorems \ref{t1} and \ref{t2}).  Conclusions are presented in Section 5.
\section{Mathematical Preliminaries}
The $L^{2} (\Omega)$ inner product is $(\cdot , \cdot)$ and the induced norm is $\| \cdot \|$.  Moreover, for any subset $\Omega \neq O \subset \mathbb{R}^{d}$ we define the $L^{2}$ inner product $(\cdot , \cdot)_{L2(O)}$ and norm $\| \cdot \|_{L2(O)}$.  Define the Hilbert spaces,
\begin{align*}
	X &:= H^{1}_{0}(\Omega)^{d} = \{ v \in H^{1}(\Omega)^d : v = 0 \; on \; \partial \Omega \}, \;
	Q := L^{2}_{0}(\Omega) = \{ q \in L^{2}(\Omega) : \int_{\Omega} q dx = 0 \}, \\
	W_{\Gamma_{1}} &:= \{ S \in H^{1}(\Omega) : S = 0 \; on \; \Gamma_{1} \}, \; W := H^{1}(\Omega), \;	V := \{ v \in X : (q,\nabla \cdot v) = 0 \; \forall q \in Q \}.
\end{align*}
The explicitly skew-symmetric trilinear forms are denoted:
\begin{align*}
	b(u,v,w) &= \frac{1}{2} (u \cdot \nabla v, w) - \frac{1}{2} (u \cdot \nabla w, v) \; \; \; \forall u,v,w \in X, \\
	b^{\ast}(u,T,S) &= \frac{1}{2} (u \cdot \nabla T, S) - \frac{1}{2} (u \cdot \nabla S, T) \; \; \; \forall u \in X, \; T,S \in W.
\end{align*}
\noindent They enjoy the following useful properties.
\begin{lemma} \label{l1}
There are constants $C_{1}$ and $C_{2}$ such that for all u,v,w $\in$ X and T,S $\in W$, $b(u,v,w)$ and $b^{\ast}(u,T,S)$ satisfy
\begin{align*}
b(u,v,w) &= (u \cdot \nabla v, w) + \frac{1}{2} ((\nabla \cdot u)v, w), \\
b^{\ast}(u,T,S) &= (u \cdot \nabla T, S) + \frac{1}{2} ((\nabla \cdot u)T, S), \\
b(u,v,w) &\leq C_{1} \| \nabla u \| \| \nabla v \| \| \nabla w \|, \\
b^{\ast}(u,T,S) &\leq C_{2} \| \nabla u \| \| \nabla T \| \| \nabla S \|.
\end{align*}
\begin{proof}
See Lemma 18 p. 123 of \cite{Layton}.
\end{proof}
\end{lemma}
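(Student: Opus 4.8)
The plan is to handle the two algebraic identities first and the two boundedness estimates second, treating $b$ and $b^{\ast}$ by a single argument since the scalar case differs only in notation (the vector arguments $v,w\in X$ being replaced by scalars $T,S\in W$, while the convecting field $u$ stays in $X$ throughout).

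For the identities, I would start from the defining skew-symmetric expression $b(u,v,w)=\tfrac12(u\cdot\nabla v,w)-\tfrac12(u\cdot\nabla w,v)$ and integrate by parts componentwise in the second term: $\int_{\Omega}u_{i}(\partial_{i}w_{j})v_{j}\,dx=-\int_{\Omega}(\partial_{i}u_{i})w_{j}v_{j}\,dx-\int_{\Omega}u_{i}(\partial_{i}v_{j})w_{j}\,dx+\int_{\partial\Omega}(u\cdot n)\,w_{j}v_{j}\,ds$. The boundary integral vanishes because $u\in X=H^{1}_{0}(\Omega)^{d}$; this is the one structural fact that must be invoked. Summing over $i,j$ gives $(u\cdot\nabla w,v)=-((\nabla\cdot u)v,w)-(u\cdot\nabla v,w)$, and substituting this back into the definition collapses the two copies of $\tfrac12(u\cdot\nabla v,w)$ into one while producing the term $+\tfrac12((\nabla\cdot u)v,w)$, which is exactly the claimed identity. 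The identity for $b^{\ast}$ follows verbatim with $v,w$ replaced by the scalars $T,S$, and since $u\in X$ still vanishes on $\partial\Omega$ the same boundary term drops out.

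For the bounds, I would again work from the skew-symmetric form and estimate each of the two trilinear terms by H\"older's inequality with exponents $4,2,4$ (noting $\tfrac14+\tfrac12+\tfrac14=1$), e.g. $|(u\cdot\nabla v,w)|\le\|u\|_{L4(\Omega)}\,\|\nabla v\|\,\|w\|_{L4(\Omega)}$. The Sobolev embedding $H^{1}(\Omega)\hookrightarrow L^{4}(\Omega)$ in dimensions $d=2,3$ (equivalently the Ladyzhenskaya inequality), together with the Poincar\'e--Friedrichs inequality on $X$, yields $\|u\|_{L4(\Omega)}\le C\|\nabla u\|$ and $\|w\|_{L4(\Omega)}\le C\|\nabla w\|$; the companion term $(u\cdot\nabla w,v)$ is bounded identically, and adding the two gives $|b(u,v,w)|\le C_{1}\|\nabla u\|\,\|\nabla v\|\,\|\nabla w\|$. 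The estimate for $b^{\ast}$ is the same computation with $v,w$ replaced by $T,S$.

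The argument is otherwise routine; the only point requiring care is that the Poincar\'e-type control $\|\cdot\|_{L4(\Omega)}\lesssim\|\nabla(\cdot)\|$ is applied to the temperature arguments of $b^{\ast}$, so one must use that the relevant temperature space carries a Poincar\'e inequality (for instance $W_{\Gamma_{1}}$, since $|\Gamma_{1}|>0$) or, equivalently, that the estimate is understood modulo constants. Making the vanishing of the boundary term explicit and pinning down this Poincar\'e requirement on the temperature space are the only subtleties; the rest is a direct chain of H\"older and embedding inequalities.
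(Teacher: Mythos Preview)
Your argument is correct and is precisely the standard one; the paper itself gives no proof at all but simply cites Lemma~18 of Layton's monograph, where exactly this integration-by-parts/H\"older/Sobolev chain is carried out for $b(\cdot,\cdot,\cdot)$ on $X\times X\times X$. So there is nothing to compare in terms of approach: you have supplied what the paper delegates to a reference.

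Your closing remark about Poincar\'e on the temperature arguments is not a minor scruple but a genuine point. As literally stated in the paper, the bound $b^{\ast}(u,T,S)\le C_{2}\|\nabla u\|\|\nabla T\|\|\nabla S\|$ cannot hold for all $T,S\in W=H^{1}(\Omega)$: take $T$ a nonzero constant and $u$ not divergence-free, so that $b^{\ast}(u,T,S)=\tfrac{T}{2}((\nabla\cdot u),S)$ need not vanish while $\|\nabla T\|=0$. The cited lemma in Layton is for $X\times X\times X$, where Poincar\'e is available in every slot; the paper's extension to $W$ is tacit and only works because every use of the estimate in the stability analysis has the temperature arguments in $W_{\Gamma_{1},h}$ (or a shift thereof), where $|\Gamma_{1}|>0$ furnishes the needed Poincar\'e--Friedrichs inequality. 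Your proposal already anticipates this, so no change is needed beyond stating the hypothesis on the temperature space explicitly.
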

\subsection{Finite Element Preliminaries}
Consider a regular mesh $\Omega_{h} = \{K\}$ of $\Omega$ with maximum triangle diameter length $h$.  Let $X_{h} \subset X$, $Q_{h} \subset Q$, $W_{h} \subset W$, and $W_{\Gamma_{1},h} \subset W_{\Gamma_{1}}$ be conforming finite element spaces consisting of continuous piecewise polynomials of degrees \textit{j}, \textit{l}, \textit{j}, and \textit{j}, respectively.  Furthermore, we consider those spaces for which the discrete inf-sup condition is satisfied,
\begin{equation} \label{infsup} 
\inf_{q_{h} \in Q_{h}} \sup_{v_{h} \in X_{h}} \frac{(q_{h}, \nabla \cdot v_{h})}{\| q_{h} \| \| \nabla v_{h} \|} \geq \beta > 0,
\end{equation}
\noindent where $\beta$ is independent of $h$.  The space of discretely divergence free functions is defined by 
\begin{align*}
	V_{h} := \{v_{h} \in X_{h} : (q_{h}, \nabla \cdot v_{h}) = 0, \forall q_{h} \in Q_{h}\}
\end{align*}
and accompanying dual norm
$$	\| w \|_{V_{h}^{\ast}} = \sup_{v_{h} \in V_{h}}\frac{(w,v_{h})}{\|\nabla v_{h}\|}.$$
The continuous time, finite element in space weak formulation of the system (\ref{ndeq1u}) - (\ref{ndeq1bc}) is: Find $u_{h}:[0,t^{\ast}] \rightarrow X_{h}$, $p_{h}:[0,t^{\ast}] \rightarrow Q_{h}$, $T_{h}:[0,t^{\ast}] \rightarrow W_{h}$ for a.e. $t \in (0,t^{\ast}]$ satisfying:
\begin{align} \label{weak:ueq}
(u_{h,t},v_{h}) + b(u_{h},u_{h},v_{h}) + Pr(\nabla u_{h},\nabla v_{h}) - (p_{h}, \nabla \cdot v_{h}) &= PrRa(\gamma T_{h},v_{h}) + (f,v_{h}) \; \; \forall v_{h} \in X_{h}, \\
(q_{h}, \nabla \cdot u_{h}) &= 0 \; \; \forall q_{h} \in Q_{h}, \\
(T_{h,t},S_{h}) + b^{\ast}(u_{h},T_{h},S_{h}) + (\nabla T_{h},\nabla S_{h}) &= (\gamma,S_{h}) \; \; \forall S_{h} \in W_{h,\Gamma_{1}}. \label{weak:Teq}
\end{align}

\subsection{Construction of the discrete Hopf extension}
The mesh condition $h= \mathcal{O}(Ra^{-30/(6-d)})$ from \cite{Colm} arises from the use of the Scott-Zhang interpolant of degree $j$.  To improve upon this condition, we develop a special interpolant for the upcoming analysis.  We construct it as follows:
\\ \indent \textbf{Step one:} Consider those mesh elements $K$ such that $K \cap \Gamma_{1} \neq \emptyset$.  Enumerate these mesh elements from 1 to $l'$. 
\vspace{3pt} 
\\ \indent \textbf{Step two:} $\forall \,1\leq l \leq l'$, let	$\{\phi^{l}_{k}\}^{d+1}_{k=1}$ be the usual piecewise linear hat functions with $supp \; \phi^{l}_{k} \subset K_{l}$ . 
\vspace{3pt}  
\\ \indent \textbf{Step three:} Fix $l$, select those $\phi^{l}_{k}$ such that $\phi^{l}_{k}(x) = 1$ for $x \in K_{l} \cap \Gamma_{1}$.
\vspace{3pt} 
\\ \indent \textbf{Step four:} Define $\psi_{i}$ such that $\{\psi^{i}\}^{i'}_{i=1} = \{\phi^{l}_{k}\}^{k',l'}_{k,l=1}$.
\vspace{3pt} 
\\ \indent \textbf{Step five:} Define $\tau = \sum^{i'}_{i=1} \tilde{T}^{i}\psi^{i}$ where $-\infty < \tilde{T}_{min} \leq \tilde{T}^{i} \leq \tilde{T}_{max} < \infty$ are arbitrary constants.  
\\ \noindent Then,
\begin{theorem} \label{l2}
Suppose $\tilde{T}:\Gamma_{1} \rightarrow \mathbb{R}$ is a piecewise linear function defined on $\Gamma_{1}$.  The discrete Hopf extension $\tau:\Omega \rightarrow \mathbb{R}$ satisfies
\begin{align*}
\tau (x) = \tilde{T} \; on \; \Gamma_{1},
\\ \tau(x) = 0 \; on \; \Omega - \cup^{l'}_{l=1} K_{l}.
\end{align*}
 Moreover, let $\delta = \max_{1\leq l \leq l'} h_{l}$.  Then, the following estimate holds: $\forall \epsilon > 0$, $\forall (\chi_{1},\chi_{2}) \in (X_{h},W_{h})$
\begin{align}
\vert b^{\ast}(\chi_{1},\tau,\chi_{2}) \vert \leq C \delta \Big( \epsilon^{-1} \| \nabla \chi_{1} \|^{2} + \epsilon \| \nabla \chi_{2} \|^{2} \Big). \label{tau3}
\end{align}
\end{theorem}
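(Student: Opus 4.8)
The plan is to exploit that $\tau$ lives only in the one-element-thick layer $\Sigma := \bigcup_{l=1}^{l'} K_l$ adjacent to $\Gamma_1$, so that every term in $b^{\ast}(\chi_1,\tau,\chi_2)$ can be traded against $\|\nabla\chi_1\|$ and $\|\nabla\chi_2\|$ by local Poincar\'e inequalities, each of which contributes a factor of the local meshwidth $h_l\le\delta$. First I would record the properties of $\tau$ that follow from Steps one--five: $\tau$ is the nodal interpolant of $\tilde T$ at the finite element nodes lying on $\Gamma_1$, hence $\tau=\tilde T$ on $\Gamma_1$ and $\tau\equiv 0$ on $\Omega\setminus\Sigma$; $\tau$ is bounded pointwise by $M:=\max(|\tilde T_{min}|,|\tilde T_{max}|)$ since it is a convex-type combination of nonnegative hat functions whose sum is at most one; and on each $K_l$ the function $\tau$ is affine, so a standard inverse estimate (using shape-regularity of $\Omega_h$) gives $\|\nabla\tau\|_{L^\infty(K_l)}\le C M h_l^{-1}$.

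Next I would expand $b^{\ast}(\chi_1,\tau,\chi_2)=\frac12(\chi_1\cdot\nabla\tau,\chi_2)-\frac12(\chi_1\cdot\nabla\chi_2,\tau)$ and treat the two pieces separately, restricting all integrals to $\Sigma$. For the second piece, H\"older's inequality, $\|\tau\|_{L^\infty(\Omega)}\le M$, and the elementwise Poincar\'e inequality $\|\chi_1\|_{L^2(K_l)}\le C h_l\|\nabla\chi_1\|_{L^2(K_l)}$ -- valid because $\chi_1\in X_h$ vanishes on $\overline{K_l}\cap\Gamma_1$ -- yield, after summing over $l$, using $h_l\le\delta$, and applying Cauchy--Schwarz, a bound of the form $CM\delta\,\|\nabla\chi_1\|\,\|\nabla\chi_2\|$; Young's inequality then produces the stated form. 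The delicate piece is the first one, $\frac12(\chi_1\cdot\nabla\tau,\chi_2)$, because $\|\nabla\tau\|_{L^\infty(K_l)}$ is only $\mathcal O(h_l^{-1})$, and the construction of $\tau$ is engineered precisely so that this factor is absorbed. On each $K_l$ I would estimate $|(\chi_1\cdot\nabla\tau,\chi_2)_{L^2(K_l)}|\le \|\nabla\tau\|_{L^\infty(K_l)}\|\chi_1\|_{L^2(K_l)}\|\chi_2\|_{L^2(K_l)}\le C M h_l^{-1}\,(C h_l\|\nabla\chi_1\|_{L^2(K_l)})\,\|\chi_2\|_{L^2(K_l)}$, the $h_l^{-1}$ cancelling against the Poincar\'e factor for $\chi_1$; a second Poincar\'e inequality across the layer, $\|\chi_2\|_{L^2(K_l)}\le C\delta\|\nabla\chi_2\|_{L^2(K_l)}$ (using that $\chi_2$ vanishes on $\Gamma_1$), supplies the remaining factor, and summation with Cauchy--Schwarz in $l$ plus Young's inequality again gives the claim.

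The main obstacle is exactly this first term: a naive estimate loses a power of $h_l$ and the bound would fail, so the argument hinges on (i) $\tau$ being supported in a single layer of elements, which is what makes the two normal-direction Poincar\'e inequalities available, and (ii) the test functions vanishing on $\Gamma_1$ ($\chi_1\in X_h$ on all of $\partial\Omega$ and $\chi_2$ on $\Gamma_1$, which is the relevant situation since temperature test functions lie in $W_{h,\Gamma_1}$). I would also note that the same chain of estimates, starting from $b^{\ast}(\chi_1,\tau,\chi_2)=-(\chi_1\cdot\nabla\chi_2,\tau)-\frac12((\nabla\cdot\chi_1)\chi_2,\tau)$ via Lemma \ref{l1} and skew-symmetry in the last two arguments, gives an alternative route that never differentiates $\tau$, trading the bound on $\nabla\tau$ for $\|\nabla\cdot\chi_1\|_{L^2(\Sigma)}\le\sqrt d\,\|\nabla\chi_1\|$; in either route the decisive factor $\delta$ comes from Poincar\'e across the thin layer $\Sigma$.
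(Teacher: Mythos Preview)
Your argument is correct and rests on the same core idea as the paper's proof: localize to the single layer $\Sigma=\bigcup_l K_l$ and extract a factor $h_l$ from each element via a Poincar\'e-type inequality that uses the vanishing of $\chi_1$ (and $\chi_2$) on $\Gamma_1\cap K_l$. The execution differs. The paper works basis function by basis function, pulls each integral back to the reference element $\hat K$, and replaces your explicit bounds $\|\nabla\tau\|_{L^\infty(K_l)}\le CMh_l^{-1}$ and local Poincar\'e inequalities with the standard affine-scaling estimates $|\det B_{K_l}|\sim h_l^d$, $\|B_{K_l}^{-T}\|\sim h_l^{-1}$ together with an ``equivalence of norms'' step $\|\hat\chi\|_{L^2(\hat K)}\le C\|\hat\nabla\hat\chi\|_{L^2(\hat K)}$ on the finite-dimensional polynomial space. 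That norm equivalence is precisely your Poincar\'e inequality in disguise---it too fails for constants and therefore tacitly needs the polynomial to vanish on the face lying in $\Gamma_1$. Your physical-element presentation is more transparent about where the $\delta$ comes from and, in particular, about the requirement that $\chi_2$ vanish on $\Gamma_1$: as you observe, the estimate as stated for $\chi_2\in W_h$ cannot hold (take $\chi_2\equiv 1$), and both proofs really establish it for $\chi_2\in W_{\Gamma_1,h}$, which is the only case invoked in the subsequent stability analysis. Your alternative route through $b^{\ast}(\chi_1,\tau,\chi_2)=-(\chi_1\cdot\nabla\chi_2,\tau)-\tfrac12((\nabla\cdot\chi_1)\chi_2,\tau)$ is a nice observation but does not remove this dependence, since the divergence term still needs a Poincar\'e bound on $\chi_2$ in the layer to produce the factor $\delta$.
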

\begin{proof}
The properties are a consequence of the construction.  For the estimate (\ref{tau3}), it suffices to consider $\vert b^{\ast}(\chi_{1},\tilde{T}^{i}\psi^{i},\chi_{2}) \vert$ where $\tilde{T}^{i} = \tilde{T}(x_{i})$ is the corresponding nodal value of $\tilde{T}$.  For each $\psi^{i}$ there is a corresponding mesh element $K_{l}$ such that $supp \; \psi^{i} \subset K_{l}$.  Let $\hat{K} \subset \mathbb{R}^{d}$ be the reference element and $F_{K_{l}}:\hat{K} \rightarrow K_{l}$ the associated affine transformation given by $x = F_{K_{l}}\hat{x} = B_{K_{l}}\hat{x} + b_{K_{l}}$.  We will utilize the operator norm $\| \cdot \|_{op}$ and the Euclidean norm $\vert \cdot \vert_{2}$ below. 
\\ \indent Consider $\frac{1}{2}\vert (\chi_{1} \cdot \nabla \tilde{T}^{i}\psi^{i},\chi_{2}) \vert$, the estimate for $\frac{1}{2}\vert (\chi_{1} \cdot \nabla \chi_{2}, \tilde{T}^{i}\psi^{i}) \vert$ follows analogously.  Transform to the reference element, use standard FEM estimates, the Cauchy-Schwarz inequality, and equivalence of norms.  Then,
\begin{align} \label{mainestimate}
\frac{1}{2}\vert (\chi_{1}\cdot \nabla \tilde{T}^{i}\psi^{i},\chi_{2}) \vert &= \frac{|\tilde{T}^{i}||det(B_{K_{l}})|}{2} \vert\int_{\hat{K}} \hat{\chi_{1}}\cdot B^{-T}_{K_{l}}\hat{\nabla} \hat{\psi^{i}} \hat{\chi_{2}} d\hat{x} \vert 
\\ &\leq \frac{|\tilde{T}^{i}||det(B_{K_{l}})|}{2} \| B^{-T}_{K_{l}}\|_{op} \vert \hat{\nabla} \hat{\psi^{i}} \vert_{2} \int_{\hat{K}} \vert \hat{\chi_{1}} \vert_{2} \vert\hat{\chi_{2}}\vert d\hat{x} \notag
\\ &\leq Ch_{l}^{d-1} \|\hat{\chi_{1}}\|_{L2(\hat{K})} \|\hat{\chi_{2}}\|_{L2(\hat{K})} \notag
\\ &\leq Ch_{l}^{d-1} \|\hat{\nabla}\hat{\chi_{1}}\|_{L2(\hat{K})} \|\hat{\nabla}\hat{\chi_{2}}\|_{L2(\hat{K})}. \notag
\end{align}
Consider $\|\hat{\nabla}\hat{\chi_{2}}\|_{L2(\hat{K})}$ and $\|\hat{\nabla}\hat{\chi_{1}}\|_{L2(\hat{K})}$.  Transforming back to the mesh element and using standard FEM estimates yields
\begin{align} \label{chi2estimate}
\|\hat{\nabla}\hat{\chi_{2}}\|^{2}_{L2(\hat{K})} &= |det(B^{-1}_{K_{l}})| \int_{K_{l}} B^{T}_{K_{l}} \nabla \chi_{2} \cdot B^{T}_{K_{l}} \nabla \chi_{2} dx
\\ &\leq |det(B^{-1}_{K_{l}})| \|B^{T}_{K_{l}}\|^{2}_{op} \| \nabla \chi_{2} \|^{2}_{L2(K_{l})} \notag
\\ &\leq Ch_{l}^{2-d} \| \nabla \chi_{2} \|^{2}_{L2(K_{l})} \notag
\\ &\leq Ch_{l}^{2-d} \| \nabla \chi_{2} \|^{2}, \notag
\\ \|\hat{\nabla}\hat{\chi_{1}}\|^{2}_{L2(\hat{K})} &\leq Ch_{l}^{2-d} \| \nabla \chi_{1} \|^{2}. \label{chi1estimate}
\end{align}
Use (\ref{chi2estimate}) and (\ref{chi1estimate}) in (\ref{mainestimate}) and Young's inequality.  This yields
\begin{align*}
\frac{1}{2} \vert (\chi_{1}\cdot \nabla \tilde{T}^{i}\psi^{i},\chi_{2}) \vert \leq Ch_{l} \Big( \epsilon \|\nabla \chi_{1}\|^{2} + \epsilon^{-1} \|\nabla \chi_{2}\|^{2} \Big).
\end{align*}
Summing from $i=1$ to $i=i'$ and taking the maximum $h_{l}$ yields the result.
\end{proof}
\textbf{Remark:} If we allow the interpolant to be constructed with the basis elements of $W_{h}$, we can reconstruct any function $\upsilon_{h} \in W_{h}$ exactly on the boundary $\Gamma_{1}$ with the same properties.\\
\textbf{Remark:}  For square and cubic domains we can define such an interpolant explicitly, e.g.,
	\[ \tau(x) = 
	\begin{cases} 
	\frac{1}{2\delta}(2\delta- x_{\alpha}) & 0 \leq x_{\alpha} \leq \delta, \\
	\frac{1}{2} & \delta \leq x_{\alpha} \leq 1-\delta, \\
	\frac{1}{2\delta}(1- x_{\alpha}) & 1-\delta \leq x_{\alpha} \leq 1,
	\end{cases}
	\]
where $\alpha$ is in the direction orthogonal to the differentially heated walls or in the direction of gravity for the differentially heated vertical wall problem and Rayleigh-B\'{e}nard problem, respectively.  This function was introduced first by Hopf \cite{Hopf} and has been useful in estimating energy dissipation rates for shear-driven flows and convection \cite{Doering,Doering2}.
\begin{figure}
	\centering
	\includegraphics[height=1.5in, keepaspectratio]{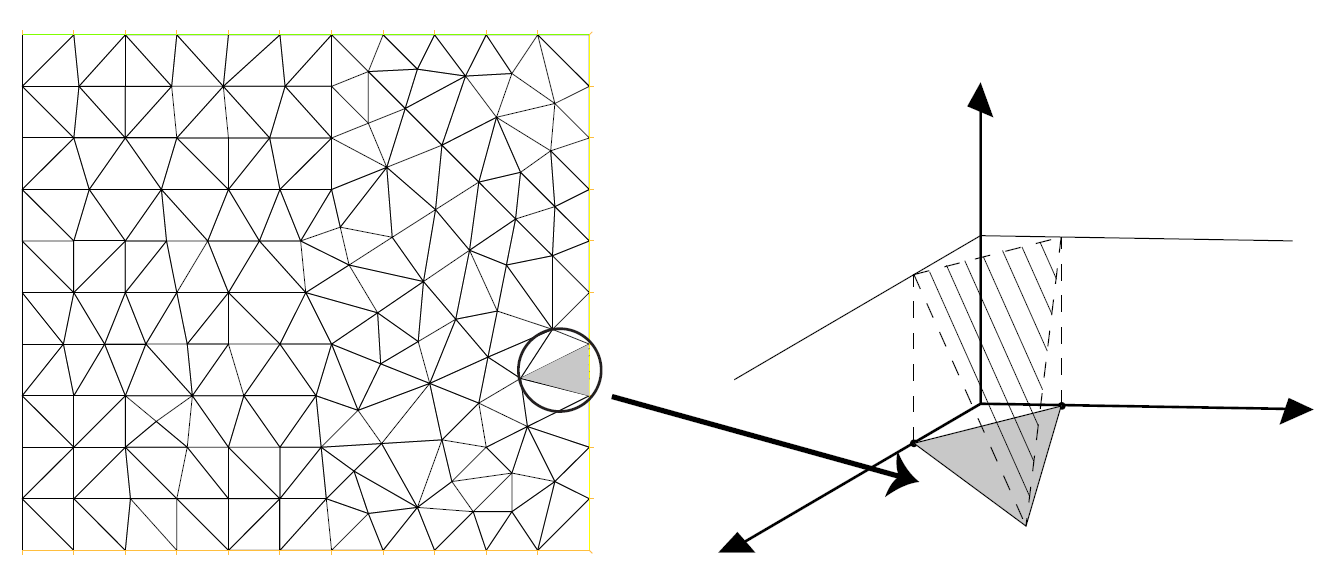}
	\caption{The discrete Hopf interpolant on one mesh element.}
\end{figure}
\section{Numerical Schemes}
In this section, we consider the following popular temporal discretizations: BDF1, linearly implicit BDF1, BDF2, and linearly implicit BDF2; see \cite{Ascher,Ingram} regarding linearly implicit variants.  Let $\eta(\chi) = a_{-1}\chi^{n+1} + a_{0}\chi^{n}$.  Denote the fully discrete solutions by $u^{n}_{h}$, $p^{n}_{h}$, and $T^{n}_{h}$ at time levels $t^{n} = n\Delta t$, $n = 0,1,...,N$, and $t^{\ast}=N\Delta t$.  Given $(u^{n}_{h}, T^{n}_{h})$ $\in (X_{h},W_{h})$, find $(u^{n+1}_{h}, p^{n+1}_{h}, T^{n+1}_{h})$ $\in (X_{h},Q_{h},W_{h})$ satisfying, for every $n = 0,1,...,N-1$, the fully discrete approximation of the system (\ref{ndeq1u}) - (\ref{ndeq1bc}) is

\noindent \textbf{BDF1 and linearly implicit BDF1:}
\begin{multline}\label{scheme:one:velocity}
(\frac{u^{n+1}_{h} - u^{n}_{h}}{\Delta t},v_{h}) + b(\eta(u_{h}),u^{n+1}_{h},v_{h}) + Pr(\nabla u^{n+1}_{h},\nabla v_{h})
\\ - (p^{n+1}_{h}, \nabla \cdot v_{h}) =  PrRa(\xi \eta(T_{h}),v_{h}) + (f^{n+1},v_{h}) \; \; \forall v_{h} \in X_{h},
\end{multline}
\begin{equation}
(\nabla \cdot u^{n+1}_{h},q_{h}) = 0 \; \; \forall q_{h} \in Q_{h},
\end{equation}
\begin{align}\label{scheme:one:temperature}
(\frac{T^{n+1}_{h} - T^{n}_{h}}{\Delta t},S_{h}) + b^{\ast}(\eta(u_{h}),T^{n+1}_{h},S_{h}) + (\nabla T^{n+1}_{h},\nabla S_{h}) = (\gamma^{n+1},S_{h}) \; \; \forall S_{h} \in W_{h,\Gamma_{1}},
\end{align}
where BDF1 is given by $a_{-1} = a_{0} +1 = 1$ and linearly implicit BDF1 by $a_{-1} + 1 = a_{0} = 1$.  Moreover, given $(u^{n-1}_{h}, T^{n-1}_{h})$ and $(u^{n}_{h}, T^{n}_{h})$ $\in (X_{h},Q_{h},W_{h})$, find $(u^{n+1}_{h}, p^{n+1}_{h}, T^{n+1}_{h})$ $\in (X_{h},Q_{h},W_{h})$ satisfying, for every $n = 1,2,...,N-1$, the fully discrete approximation of the system (\ref{ndeq1u}) - (\ref{ndeq1bc}) is
\\ \noindent \textbf{BDF2 and linearly implicit BDF2:}
\begin{multline}\label{scheme:three:velocity}
	(\frac{3u^{n+1}_{h} - 4u^{n}_{h} + u^{n-1}_{h}}{2\Delta t},v_{h}) + b(\eta(u_{h}),u^{n+1}_{h},v_{h}) + Pr(\nabla u^{n+1}_{h},\nabla v_{h}) - (p^{n+1}_{h}, \nabla \cdot v_{h})
	\\ =  PrRa(\xi \eta(T_{h}),v_{h}) + (f^{n+1},v_{h}) \; \; \forall v_{h} \in X_{h},
\end{multline}
\begin{equation}
(\nabla \cdot u^{n+1}_{h},q_{h}) = 0 \; \; \forall q_{h} \in Q_{h},
\end{equation}
\begin{align}\label{scheme:three:temperature}
(\frac{3T^{n+1}_{h} - 4T^{n}_{h} + T^{n-1}_{h}}{2\Delta t},S_{h}) + b^{\ast}(\eta(u_{h}),T^{n+1}_{h},S_{h}) + (\nabla T^{n+1}_{h},\nabla S_{h}) = (\gamma^{n+1},S_{h}) \; \; \forall S_{h} \in W_{h,\Gamma_{1}},
\end{align}
where BDF2 is given by $a_{-1} = a_{0} +1 = 1$ and linearly implicit BDF2 by $1 - a_{-1} = a_{0} = -1$. 
\section{Numerical Analysis}
We present stability results for the aforementioned algorithms provided the first meshline in the finite element mesh is within $\mathcal{O} (Ra^{-1})$ of the heated wall.

\subsection{Stability Analysis}

\begin{theorem} \label{t1}
Consider \textbf{BDF1} or \textbf{linearly implicit BDF1}.   Suppose $f \in L^{2}(0,t^{\ast};H^{-1}(\Omega)^{d})$, and $\gamma \in L^{2}(0,t^{\ast};H^{-1}(\Omega))$.  If $\delta = \mathcal{O} (Ra^{-1})$, then there exist $C > 0$, independent of $t^{\ast}$, such that
\\ \textbf{BDF:}
\begin{multline*}
\frac{1}{2}\|T^{N}_{h}\|^{2} + \|u^{N}_{h}\|^{2} + \sum_{n = 0}^{N-1}\|T^{n+1}_{h} - T^{n}_{h}\|^{2} + \sum_{n = 0}^{N-1}\|u^{n+1}_{h} - u^{n}_{h}\|^{2} + \frac{\Delta t}{4} \sum_{n = 0}^{N-1} \|\nabla T^{n+1}_{h}\|^{2}
\\ + \frac{Pr\Delta t}{4}\sum_{n = 0}^{N-1}\| \nabla u^{n+1}_{h} \|^{2} \leq Ct^{\ast},
\end{multline*}
\textbf{linearly implicit BDF:}
\begin{multline*}
\frac{1}{2}\|T^{N}_{h}\|^{2} + \|u^{N}_{h}\|^{2} + \sum_{n = 0}^{N-1}\|T^{n+1}_{h} - T^{n}_{h}\|^{2} + \sum_{n = 0}^{N-1}\|u^{n+1}_{h} - u^{n}_{h}\|^{2} + \frac{\Delta t}{4} \sum_{n = 0}^{N-1} \|\nabla T^{n+1}_{h}\|^{2}
\\ + \frac{Pr\Delta t}{8} \sum_{n = 0}^{N-1} \| \nabla u^{n+1}_{h} \|+ \frac{Pr\Delta t}{8}\| \nabla u^{N}_{h} \|^{2} \leq Ct^{\ast}.
\end{multline*}
\noindent Further,
\begin{equation*}
\beta \Delta t \sum^{N-1}_{n=0} \| p^{n+1}_{h}\| \leq C\sqrt{t^{\ast}}.
\end{equation*}
\end{theorem}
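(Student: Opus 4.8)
The plan is to reproduce at the discrete level the continuous ``background field'' energy argument, using the discrete Hopf extension of Theorem \ref{l2} as the background profile. Write $T^{n+1}_{h} = \phi^{n+1}_{h} + \tau$, where $\tau$ is the time-independent extension of the Dirichlet data given by Theorem \ref{l2} — constructed, as noted in the remark following that theorem, from the basis of $W_{h}$ so that $\phi^{n+1}_{h} := T^{n+1}_{h} - \tau \in W_{h,\Gamma_{1}}$ — and observe that $T^{n+1}_{h} - T^{n}_{h} = \phi^{n+1}_{h} - \phi^{n}_{h}$ since $\tau$ is independent of $n$. I would then take $v_{h} = u^{n+1}_{h}$ in the momentum equation and $S_{h} = \mu\,\phi^{n+1}_{h}$ in the temperature equation, where $\mu = \mu(Pr,Ra) = \mathcal{O}(PrRa^{2})$ is a fixed scaling constant to be pinned down below, and add the two resulting identities. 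In the momentum equation the pressure term vanishes since $u^{n+1}_{h} \in V_{h}$, the trilinear term vanishes by skew-symmetry of $b$, and the polarization identity $(\frac{a-b}{\Delta t},a) = \frac{1}{2\Delta t}(\|a\|^{2}-\|b\|^{2}+\|a-b\|^{2})$ turns the discrete time derivative into telescoping plus numerical-dissipation terms. In the temperature equation, skew-symmetry of $b^{\ast}$ collapses the advective term onto the thin boundary strip, $b^{\ast}(\eta(u_{h}),T^{n+1}_{h},\phi^{n+1}_{h}) = b^{\ast}(\eta(u_{h}),\tau,\phi^{n+1}_{h})$, and the same polarization identity applies to $(\frac{\phi^{n+1}_{h}-\phi^{n}_{h}}{\Delta t},\phi^{n+1}_{h})$.

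After adding, the terms that must be moved to the right and estimated are the buoyancy coupling $PrRa(\xi\,\eta(T_{h}),u^{n+1}_{h})$, the scaled advective remainder $\mu\,b^{\ast}(\eta(u_{h}),\tau,\phi^{n+1}_{h})$, the diffusion cross term $\mu(\nabla T^{n+1}_{h},\nabla\tau)$, and the data terms $(f^{n+1},u^{n+1}_{h})$ and $\mu(\gamma^{n+1},\phi^{n+1}_{h})$. For the buoyancy term, split $\eta(T_{h}) = \eta(\phi_{h}) + \tau$; the $\tau$-piece is controlled by the Cauchy--Schwarz, Poincar\'e, and Young inequalities by $\frac{Pr}{8}\|\nabla u^{n+1}_{h}\|^{2}$ plus an $\mathcal{O}(PrRa^{2}\|\tau\|^{2})$ constant, and the $\eta(\phi_{h})$-piece likewise by $\frac{Pr}{8}\|\nabla u^{n+1}_{h}\|^{2} + CPrRa^{2}\|\nabla\eta(\phi_{h})\|^{2}$. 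Since $\eta(\phi_{h})$ equals $\phi^{n+1}_{h}$ (BDF1) or $\phi^{n}_{h}$ (linearly implicit), $\|\nabla\eta(\phi_{h})\|^{2} \le 2\|\nabla T^{\star}_{h}\|^{2}+2\|\nabla\tau\|^{2}$ with $\star \in \{n,n+1\}$, and the $\|\nabla T^{\star}_{h}\|^{2}$ part is absorbed into the \emph{scaled} dissipation $\mu\|\nabla T^{n+1}_{h}\|^{2}$: this is exactly what forces $\mu \gtrsim PrRa^{2}$. The diffusion cross term and the data terms are handled by Young's inequality, leaving $\|f^{n+1}\|_{-1}^{2}$, $\mu\|\gamma^{n+1}\|_{-1}^{2}$, $\mu\|\nabla\tau\|^{2}$, and $\|\tau\|^{2}$ on the right.

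The crux is the advective remainder. Apply Theorem \ref{l2} with $\chi_{1} = \eta(u_{h})$, $\chi_{2} = \phi^{n+1}_{h}$, and $\epsilon = c_{0}\delta^{-1}$ for a small absolute constant $c_{0}$. The $\epsilon\|\nabla\chi_{2}\|^{2}$ part then has coefficient $\mu C\delta\epsilon = \mathcal{O}(c_{0}\mu)$ and, after $\|\nabla\phi^{n+1}_{h}\|^{2}\le 2\|\nabla T^{n+1}_{h}\|^{2}+2\|\nabla\tau\|^{2}$, is swallowed by $\mu\|\nabla T^{n+1}_{h}\|^{2}$ for $c_{0}$ small enough; the $\epsilon^{-1}\|\nabla\chi_{1}\|^{2}$ part has coefficient $\mu C\delta\epsilon^{-1} = \mathcal{O}(PrRa^{2}\delta^{2})$, which is subordinate to the viscous dissipation $Pr\|\nabla u^{n+1}_{h}\|^{2}$ exactly when $\delta = \mathcal{O}(Ra^{-1})$. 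This is the one place the mesh hypothesis enters. For BDF1 one has $\eta(u_{h}) = u^{n+1}_{h}$ and the absorption is immediate; for linearly implicit BDF1, $\eta(u_{h}) = u^{n}_{h}$, so after multiplying by $\Delta t$ and summing one shifts the index in $\sum_{n}\|\nabla u^{n}_{h}\|^{2}$ (and, from the buoyancy term, in $\sum_{n}\|\nabla T^{n}_{h}\|^{2}$), which is what produces the additional end-point term $\|\nabla u^{N}_{h}\|^{2}$ on the left in that variant.

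It then remains to multiply the combined inequality by $\Delta t$, sum over $n = 0,\dots,N-1$, telescope the polarization terms (using $\|\phi^{N}_{h}\|^{2} \ge \frac{1}{2}\|T^{N}_{h}\|^{2}-\|\tau\|^{2}$), and invoke the data hypotheses together with $N\Delta t = t^{\ast}$ to bound $\Delta t\sum_{n}(\|f^{n+1}\|_{-1}^{2}+\mu\|\gamma^{n+1}\|_{-1}^{2}+\mu\|\nabla\tau\|^{2}+\|\tau\|^{2}) \le C(Pr,Ra)\,t^{\ast}$; since $\|\tau\|$ and $\|\nabla\tau\|$ depend on $\delta$ (hence on $Ra$) but not on $t^{\ast}$, the constant is $t^{\ast}$-independent, which gives both displayed energy bounds. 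For the pressure, the discrete inf-sup condition (\ref{infsup}) bounds $\|p^{n+1}_{h}\|$ by the momentum residual — each term of which (the discrete time derivative, the trilinear term bounded via Lemma \ref{l1}, the viscous term, the buoyancy term, and $f^{n+1}$) is controlled by quantities already estimated — and a Cauchy--Schwarz over the time index then yields $\beta\Delta t\sum_{n}\|p^{n+1}_{h}\| \le \sqrt{t^{\ast}}\,(\Delta t\sum_{n}\|p^{n+1}_{h}\|^{2})^{1/2} \le C\sqrt{t^{\ast}}$. The main obstacle is the single balancing act in the third paragraph: the buoyancy coupling (through Poincar\'e) forces $\mu \sim PrRa^{2}$, the Hopf estimate of Theorem \ref{l2} then demands $PrRa^{2}\delta^{2} \lesssim Pr$, and these are compatible precisely at $\delta = \mathcal{O}(Ra^{-1})$; the index-shift bookkeeping and the term-by-term pressure bound are routine.
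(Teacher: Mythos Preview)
Your proposal is correct and follows essentially the same route as the paper: split $T_h^{n+1}=\theta_h^{n+1}+\tau$ via the discrete Hopf extension, test the temperature and momentum equations with $\theta_h^{n+1}$ and $u_h^{n+1}$, reduce the advective coupling to the single boundary-strip term $b^{\ast}(\eta(u_h),\tau,\theta_h^{n+1})$ handled by Theorem~\ref{l2}, and balance the resulting $\mathcal{O}(PrRa^{2}\delta^{2})\|\nabla u\|^{2}$ feedback against the viscous dissipation, which is precisely what pins $\delta=\mathcal{O}(Ra^{-1})$. The only organizational differences are that the paper runs the argument sequentially (first bounding $\Delta t\sum\|\nabla\theta_h^{n+1}\|^{2}$ by $\delta^{2}\Delta t\sum\|\nabla u_h^{n+1}\|^{2}$, then substituting into the velocity estimate) rather than adding a $\mu$-scaled temperature inequality to the velocity one, and that the paper exploits the orthogonality $(\nabla\tau,\nabla\theta_h^{n+1})=0$ built into the Hopf construction to get a clean $\|\nabla\theta_h^{n+1}\|^{2}$ dissipation, whereas you keep the diffusion cross term $(\nabla T_h^{n+1},\nabla\tau)$ and bound it by Young's inequality; both lead to the same estimate.
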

\begin{proof}
Our strategy is to first estimate the temperature approximation in terms of the velocity approximation and data.  We then bound the velocity approximation in terms of data yielding stability of both approximations.  Denote $\theta^{n+1}_{h} = T^{n+1}_{h} - \tau$.  Consider \textbf{BDF1}.  Let $S_{h} = \theta^{n+1}_{h} \in W_{\Gamma_{1},h}$ in equation (\ref{scheme:one:temperature}) and use the polarization identity.  Multiply by $\Delta t$ on both sides, rewrite all quantities in terms of $\theta^{k}_{h}$, $k = n,\;n+1$, and rearrange. Since $(\nabla \tau , \nabla\theta^{n+1}_{h}) = 0$ we have,
\begin{align} \label{stability:one}
\frac{1}{2} \Big\{\|\theta^{n+1}_{h}\|^{2} - \|\theta^{n}_{h}\|^{2} + \|\theta^{n+1}_{h} - \theta^{n}_{h}\|^{2}\Big\} + \Delta t\| \nabla \theta^{n+1}_{h} \|^{2} = - \Delta t b^{\ast}(u^{n+1}_{h}, \theta^{n+1}_{h} + \tau, \theta^{n+1}_{h})
\\ + \Delta t (\gamma^{n+1},\theta^{n+1}_{h}).\notag
\end{align}
Consider $-\Delta t b^{\ast}(u^{n+1}_{h}, \theta^{n+1}_{h} + \tau, \theta^{n+1}_{h})$.  Use skew-symmetry and apply  Lemma \ref{l1},
\begin{align}
-\Delta t b^{\ast}(u^{n+1}_{h}, \theta^{n+1}_{h} + \tau, \theta^{n+1}_{h}) = -\Delta t b^{\ast}(u^{n+1}_{h},\tau, \theta^{n+1}_{h}) \leq {C \Delta t \,\delta\, } \,\big( \epsilon^{-1}_{1} \| \nabla u^{n+1}_{h} \|^{2} + \epsilon_{1} \| \nabla \theta^{n+1}_{h} \|^{2}\big). \label{stability:one:est:bgflow}
\end{align}
Use Cauchy-Schwarz-Young on $\Delta t (\gamma^{n+1},\theta^{n+1}_{h})$,
\begin{align}
\Delta t (\gamma^{n+1},\theta^{n+1}_{h}) &\leq \frac{\Delta t}{2\epsilon_{2}} \|\gamma^{n+1}\|^{2}_{-1} + \frac{\Delta t \epsilon_{2}}{2} \|\nabla \theta^{n+1}_{h}\|^{2}. \label{stability:one:estg}
\end{align}
\noindent Using (\ref{stability:one:est:bgflow}) and (\ref{stability:one:estg}) in (\ref{stability:one}) leads to
\begin{multline*}
\frac{1}{2} \Big\{\|\theta^{n+1}_{h}\|^{2} - \|\theta^{n}_{h}\|^{2} + \|\theta^{n+1}_{h} - \theta^{n}_{h}\|^{2}\Big\} + \Delta t \|\nabla \theta^{n+1}_{h}\|^{2} \leq C\Delta t \,\delta\, \big( \epsilon^{-1}_{1} \| \nabla u^{n+1}_{h} \|^{2} + \epsilon_{1} \| \nabla \theta^{n+1}_{h} \|^{2}\big)
\\ + \frac{\Delta t}{2\epsilon_{2}} \|\gamma^{n+1}\|^{2}_{-1} + \frac{\Delta t \epsilon_{2}}{2} \|\nabla \theta^{n+1}_{h}\|^{2}.
\end{multline*}
\noindent Let $\epsilon_{1} = \frac{1}{2C\delta}$ and $\epsilon_{2} = 1/2$.   Regrouping terms leads to
\begin{align*}
\frac{1}{2} \Big\{\|\theta^{n+1}_{h}\|^{2} - \|\theta^{n}_{h}\|^{2} + \|\theta^{n+1}_{h} - \theta^{n}_{h}\|^{2}\Big\} +  \frac{\Delta t}{4} \|\nabla \theta^{n+1}_{h}\|^{2} \leq 2 C^{2}\Delta t \,\delta^{2}\, \| \nabla u^{n+1}_{h} \|^{2} + \Delta t \|\gamma^{n+1}\|^{2}_{-1}.
\end{align*}
\noindent Sum from $n = 0$ to $n = N-1$ and put all data on the right hand side.  This yields bounds on the temperature approximation in terms of the velocity approximation and data as follows, 
\begin{align} \label{keyeq}
\frac{1}{2} \|\theta^{N}_{h}\|^{2} + \frac{1}{2} \sum^{N-1}_{n=0} \|\theta^{n+1}_{h} - \theta^{n}_{h}\|^{2} +  \frac{\Delta t}{4} \sum^{N-1}_{n=0} \|\nabla \theta^{n+1}_{h}\|^{2} \leq 2 C^{2}\Delta t \,\delta^{2} \,\sum^{N-1}_{n=0} \| \nabla u^{n+1}_{h} \|^{2}
\\ + \Delta t \sum^{N-1}_{n=0} \|\gamma^{n+1}\|^{2}_{-1} + \frac{1}{2} \|\theta^{0}_{h}\|^{2}.\notag
\end{align}

Next, let $v_{h} = u^{n+1}_{h} \in V_{h}$ in (\ref{scheme:one:velocity}) and use the polarization identity.  Multiply by $\Delta t$ on both sides and rearrange terms.  Then,
\begin{align} \label{stability:oneu}
\frac{1}{2} \Big\{\|u^{n+1}_{h}\|^{2} - \|u^{n}_{h}\|^{2} + \|u^{n+1}_{h} - u^{n}_{h}\|^{2}\Big\} + Pr \Delta t  \|\nabla u^{n+1}_{h}\|^{2} = \Delta t PrRa(\xi (\theta^{n+1}_{h} + \tau), u^{n+1}_{h})
\\ + \Delta t (f^{n+1},u^{n+1}_{h}).\notag
\end{align}
Use the Cauchy-Schwarz-Young and Poincare-Friedrichs inequalities on $\Delta t PrRa(\xi (\theta^{n+1}_{h} + \tau), u^{n+1}_{h})$ and \\ $\Delta t (f^{n+1},u^{n+1}_{h})$ and note that $\| \xi \|_{L^{\infty}} = 1$,
\begin{align}
\Delta t PrRa(\xi \theta^{n+1}_{h}, u^{n+1}_{h}) &\leq \frac{\Delta t Pr^{2}Ra^{2}C_{PF,1}^2 C_{PF,2}^{2}}{2 \epsilon_{3}} \| \nabla \theta^{n+1}_{h} \|^{2} + \frac{\Delta t \epsilon_{3}}{2} \| \nabla u^{n+1}_{h} \|^{2}, \label{stability:one:estT}
\\ \Delta t PrRa(\xi \tau, u^{n+1}_{h}) &\leq \frac{\Delta t}{2\epsilon_{4}}Pr^2 Ra^2\|\tau\|^{2}_{-1} + \frac{\Delta t \epsilon_{4}}{2} \| \nabla u^{n+1}_{h} \|^{2}, \label{stability:one:esttau}
\\ \Delta t (f^{n+1},u^{n+1}_{h}) &\leq \frac{\Delta t}{2 \epsilon_{5}} \|f^{n+1} \|^{2}_{-1} + \frac{\Delta t \epsilon_{5}}{2} \| \nabla u^{n+1}_{h} \|^{2}. \label{stability:one:estf}
\end{align}
Using (\ref{stability:one:estT}), (\ref{stability:one:esttau}), and (\ref{stability:one:estf}) in (\ref{stability:oneu}) leads to
\begin{multline*}
\frac{1}{2} \Big\{\|u^{n+1}_{h}\|^{2} - \|u^{n}_{h}\|^{2} + \|u^{n+1}_{h} - u^{n}_{h}\|^{2}\Big\} + Pr \Delta t \|\nabla u^{n+1}_{h}\|^{2} + \leq \frac{\Delta t Pr^{2}Ra^{2}C_{PF,1}^2C_{PF,2}^2}{2 \epsilon_{3}} \| \nabla \theta^{n+1}_{h} \|^{2}
\\ + \frac{\Delta t Pr^2 Ra^2}{2\epsilon_{4}}\|\tau\|^{2}_{-1} + \frac{\Delta t}{2 \epsilon_{5}} \|f^{n+1} \|^{2}_{-1} + (\epsilon_{3} + \epsilon_{4} + \epsilon_{5})\,  \frac{\Delta t}{2}\|\nabla u^{n+1}_{h} \|^{2}.
\end{multline*}
Let $\epsilon_{3} = \epsilon_{4} = 4 \epsilon_{5} = Pr/2$.  Then,
\begin{multline*}
\frac{1}{2} \Big\{\|u^{n+1}_{h}\|^{2} - \|u^{n}_{h}\|^{2} + \|u^{n+1}_{h} - u^{n}_{h}\|^{2}\Big\} + \frac{Pr\Delta t}{4}\| \nabla u^{n+1}_{h} \| \leq \Delta t Pr Ra^{2}C_{PF,1}^2C_{PF,2}^2 \| \nabla \theta^{n+1}_{h} \|^{2}
\\ + {\Delta t Pr Ra^2}\|\tau\|^{2}_{-1} + \frac{\Delta t}{Pr} \|f^{n+1} \|^{2}_{-1}.
\end{multline*}
\noindent Summing from $n = 0$ to $n = N-1$ and putting all data on r.h.s. yields
\begin{multline} \label{stability:vel}
\frac{1}{2}\|u^{N}_{h}\|^{2} +\frac{1}{2}\sum_{n = 0}^{N-1}\|u^{n+1}_{h} - u^{n}_{h}\|^{2} + \frac{Pr\Delta t}{4}\sum_{n = 0}^{N-1}\| \nabla u^{n+1}_{h} \|\leq \Delta t Pr Ra^{2}C_{PF,1}^2C_{PF,2}^2 \sum_{n = 0}^{N-1} \| \nabla \theta^{n+1}_{h} \|^{2}
\\ + \frac{\Delta t}{Pr} \sum_{n = 0}^{N-1} \Big( Pr^2 Ra^2\|\tau\|^{2}_{-1} + \|f^{n+1} \|^{2}_{-1} \Big)+\frac{1}{2}\|u^{0}_{h}\|^{2}.
\end{multline}

Now, from equation (\ref{keyeq}), we have
\begin{multline}\label{stability:coupling}
\Delta t Pr Ra^{2}C_{PF,1}^2C_{PF,2}^2 \sum_{n = 0}^{N-1} \| \nabla \theta^{n+1}_{h} \|^{2} \leq\,8 \,C^2  C_{PF,1}^2C_{PF,2}^2Pr Ra^{2} \,\delta^{2}\, \Delta t \sum^{N-1}_{n=0} \| \nabla u^{n+1}_{h} \|^{2}
\\ + 4 Pr Ra^{2}C_{PF,1}^2C_{PF,2}^2 \Delta t \sum^{N-1}_{n=0} \|\gamma^{n+1}\|^{2}_{-1} + 2 Pr Ra^{2}C_{PF,1}^2C_{PF,2}^2 \|\theta^{0}_{h}\|^{2}.
\end{multline}
Using the above in (\ref{stability:vel}) with $\delta = \frac{1}{8\, C\, C_{PF,1}C_{PF,2}} \, Ra^{-1}$ leads to
\begin{multline} \label{stability:one:vel}
\frac{1}{2}\|u^{N}_{h}\|^{2} +\frac{1}{2}\sum_{n = 0}^{N-1}\|u^{n+1}_{h} - u^{n}_{h}\|^{2} + \frac{Pr\Delta t}{8}\sum_{n = 0}^{N-1}\| \nabla u^{n+1}_{h} \| 
\\ \leq 4 Pr Ra^{2}C_{PF,1}^2C_{PF,2}^2 \Delta t \sum^{N-1}_{n=0} \|\gamma^{n+1}\|^{2}_{-1} + 2 Pr Ra^{2}C_{PF,1}^2C_{PF,2}^2 \|\theta^{0}_{h}\|^{2}
\\ + \frac{\Delta t}{Pr} \sum_{n = 0}^{N-1} \Big( Pr^2 Ra^2 \|\tau\|^{2}_{-1} + \|f^{n+1} \|^{2}_{-1} \Big) + \frac{1}{2}\|u^{0}_{h}\|^{2}.
\end{multline}

Thus, the velocity approximation is bounded above by data and therefore the temperature approximation as well; that is, both the velocity and temperature approximations are stable.  Adding (\ref{keyeq}) and (\ref{stability:one:vel}), multiplying by 2, and using the identity $T^{n}_{h} = \theta^{n}_{h} + \tau$ together with the triangle inequality yields the result.

Next, consider \textbf{linearly implicit BDF1}.  We apply similar techniques as in the above. This leads to
\begin{align} \label{keyeq2}
\frac{1}{2} \|\theta^{N}_{h}\|^{2} + \frac{1}{2} \sum^{N-1}_{n=0} \|\theta^{n+1}_{h} - \theta^{n}_{h}\|^{2} +  \frac{\Delta t}{4} \sum^{N-1}_{n=0} \|\nabla \theta^{n+1}_{h}\|^{2} \leq 4C^{2}\Delta t \delta^{2} \sum^{N-1}_{n=0} \| \nabla u^{n}_{h} \|^{2}
\\ + \Delta t \sum^{N-1}_{n=0} \|\gamma^{n+1}\|^{2}_{-1} + \frac{1}{2} \|\theta^{0}_{h}\|^{2},\notag
\end{align}
\noindent and
\begin{multline} \label{stability:one:vel2}
\frac{1}{2}\|u^{N}_{h}\|^{2} +\frac{1}{2}\sum_{n = 0}^{N-1}\|u^{n+1}_{h} - u^{n}_{h}\|^{2} + \frac{Pr\Delta t}{8} \sum_{n = 0}^{N-1} \| \nabla u^{n+1}_{h} \| + \frac{Pr\Delta t}{8}\| \nabla u^{N}_{h} \| 
\\ \leq 4 Pr Ra^{2}C_{PF,1}^2C_{PF,2}^2 \Delta t \sum^{N-1}_{n=0} \|\gamma^{n+1}\|^{2}_{-1} + 2 Pr Ra^{2}C_{PF,1}^2C_{PF,2}^2 \|\theta^{0}_{h}\|^{2}
\\ + \frac{\Delta t}{Pr} \sum_{n = 0}^{N-1} \Big( \|\tau\|^{2}_{-1} + \|f^{n+1} \|^{2}_{-1} \Big) + \frac{1}{2}\|u^{0}_{h}\|^{2} + \frac{Pr\Delta t}{8}\| \nabla u^{0}_{h} \|.
\end{multline}
The result follows.  We now prove stability of the pressure approximation.  Consider (\ref{scheme:two:velocity}), isolate \\ $(\frac{u^{n+1}_{h} - u^{n}_{h}}{\Delta t},v_{h})$, let $0 \neq v_{h} \in V_{h}$, and multiply by $\Delta t$.  Then,
\begin{multline}\label{vv}
	({u^{n+1}_{h} - u^{n}_{h}},v_{h}) = -\Delta t b(\eta(u_{h}),u^{n+1}_{h},v_{h}) - \Delta t Pr(\nabla u^{n+1}_{h},\nabla v_{h}) + \Delta t PrRa(\xi \eta(T_{h}),v_{h}) + \Delta t (f^{n+1},v_{h}).
\end{multline}
Applying Lemma \ref{l2} to the skew-symmetric trilinear term and the Cauchy-Schwarz and Poincar\'{e}-Friedrichs inequalities to the remaining terms yields
\begin{align}
|-\Delta t b(\eta(u_{h}),u^{n+1}_{h},v_{h})| &\leq C_{1} \Delta t \| \nabla \eta(u_{h}) \| \| \nabla u^{n+1}_{h} \| \| \nabla v_{h}\|,\label{stability:one:estmean}\\
|-\Delta t Pr(\nabla u^{n+1}_{h},\nabla v_{h})| &\leq Pr \Delta t \|\nabla u^{n+1}_{h} \| \|\nabla v_{h} \|, \label{stability:one:estvisc}\\
|\Delta t PrRa(\xi \eta(T_{h}),v_{h})| &\leq PrRaC_{PF,1} \Delta t \|\eta(T_{h}) \| \|\nabla v_{h} \|, \label{stability:one:estcoupling}\\
|\Delta t(f^{n+1},v_{h})| &\leq \Delta t \|f^{n+1} \|_{-1} \|\nabla v_{h} \| \label{stability:one:estf2}.
\end{align}
Apply the above estimates in (\ref{vv}), divide by the common factor $\|\nabla v_{h} \|$ on both sides, and take the supremum over all $0 \neq v_{h} \in V_{h}$.  Then,
\begin{align}
\|u^{n+1}_{h} - u^{n}_{h}\|_{V^{\ast}_{h}} \leq C_{1} \Delta t \| \nabla \eta(u_{h}) \| \| \nabla u^{n+1}_{h} \| + Pr \Delta t \|\nabla u^{n+1}_{h} \| +  PrRaC_{PF,1} \Delta t \|\eta(T_{h}) \| + \Delta t \|f^{n+1} \|_{-1}.
\end{align}
\noindent Reconsider equations (\ref{scheme:one:velocity}) and (\ref{scheme:two:velocity}).  Multiply by $\Delta t$ and isolate the pressure term,
\begin{multline}
\Delta t (p^{n+1}_{h}, \nabla \cdot v_{h}) = (u^{n+1}_{h} - u^{n}_{h},v_{h}) + \Delta t b(\eta(u_{h}),u^{n+1}_{h},v_{h}) + Pr \Delta t(\nabla u^{n+1}_{h},\nabla v_{h})
\\ - PrRa \Delta t(\xi \eta(T_{h}),v_{h})  - \Delta t(f^{n+1},v_{h}).
\end{multline}
\noindent Apply (\ref{stability:one:estmean}), (\ref{stability:one:estvisc}), (\ref{stability:one:estcoupling}), and (\ref{stability:one:estf2}) on the r.h.s terms.  Then,
\begin{multline}
\Delta t (p^{n+1}_{h}, \nabla \cdot v_{h}) \leq (u^{n+1}_{h} - u^{n}_{h},v_{h}) + \Big(C_{1} \Delta t \| \nabla \eta(u_{h}) \| \| \nabla u^{n+1}_{h} \| + Pr \Delta t \|\nabla u^{n+1}_{h} \|
\\ +  PrRaC_{PF,1} \Delta t \|\eta(T_{h}) \| + \Delta t \|f^{n+1} \|_{-1}\Big)\|\nabla v_{h} \|.
\end{multline}
\noindent Divide by $\|\nabla v_{h} \|$ and note that $\frac{(u^{n+1}_{h} - u^{n}_{h},v_{h})}{\| \nabla v_{h} \|} \leq \|u^{n+1}_{h} - u^{n}_{h}\|_{V^{\ast}_{h}}$.  Take the supremum over all $0 \neq v_{h} \in X_{h}$,
\begin{multline}
\Delta t \sup_{0 \neq v_{h} \in X_{h}}\frac{(p^{n+1}_{h}, \nabla \cdot v_{h})}{\|\nabla v_{h} \|} \leq 2 \Big(C_{1} \Delta t \| \nabla \eta(u_{h}) \| \| \nabla u^{n+1}_{h} \| + Pr \Delta t \|\nabla u^{n+1}_{h} \|
\\ +  PrRaC_{PF,1} \Delta t \|\eta(T_{h}) \| + \Delta t \|f^{n+1} \|_{-1}\Big).
\end{multline}
\noindent Use the inf-sup condition (\ref{infsup}),
\begin{multline}
\beta \Delta t \| p^{n+1}_{h}\| \leq 2 \Big( C_{1} \Delta t \| \nabla \eta(u_{h}) \| \| \nabla u^{n+1}_{h} \| + Pr \Delta t \|\nabla u^{n+1}_{h} \|
\\ +  PrRaC_{PF,1} \Delta t \|\eta(T_{h}) \| + \Delta t \|f^{n+1} \|_{-1}\Big).
\end{multline}
\noindent Summing from $n = 0$ to $n = N-1$ yields stability of the pressure approximation, built on the stability of the temperature and velocity approximations.
\end{proof}

\begin{theorem} \label{t2}
Consider \textbf{BDF2} or \textbf{linearly implicit BDF2}.   Suppose $f \in L^{2}(0,\infty;H^{-1}(\Omega)^{d})$, and $\gamma \in L^{2}(0,\infty;H^{-1}(\Omega))$.  If $\delta = \mathcal{O} (Ra^{-1})$, then there exist $C > 0$, independent of $t^{\ast}$, such that
\\ \textbf{BDF2:}
\begin{multline*}
\frac{1}{2} \|T^{N}_{h}\|^{2} + \frac{1}{2} \|2T^{N}_{h} - T^{N-1}_{h}\|^{2} + \|u^{N}_{h}\|^{2} + \|2u^{N}_{h} - u^{N-1}_{h}\|^{2} + \sum_{n = 1}^{N-1} \|T^{n+1}_{h} - 2T^{n}_{h} + T^{n-1}_{h}\|^{2} 
\\ + \sum_{n = 1}^{N-1} \|u^{n+1}_{h} - 2u^{n}_{h} + u^{n-1}_{h}\|^{2} + \frac{\Delta t}{2} \sum_{n = 1}^{N-1} \| \nabla T^{n+1}_{h} \|^{2} + \frac{Pr \Delta t}{2} \sum_{n = 1}^{N-1} \| \nabla u^{n+1}_{h} \|^{2} \leq Ct^{\ast},
\end{multline*}
\textbf{linearly implicit BDF2:}
\begin{multline*}
\frac{1}{2} \|T^{N}_{h}\|^{2} + \frac{1}{2} \|2T^{N}_{h} - T^{N-1}_{h}\|^{2} + \|u^{N}_{h}\|^{2} + \|2u^{N}_{h} - u^{N-1}_{h}\|^{2} + \sum_{n = 1}^{N-1} \|T^{n+1}_{h} - 2T^{n}_{h} + T^{n-1}_{h}\|^{2} 
\\ + \sum_{n = 1}^{N-1} \|u^{n+1}_{h} - 2u^{n}_{h} + u^{n-1}_{h}\|^{2} + \frac{\Delta t}{2} \sum_{n = 1}^{N-1} \| \nabla T^{n+1}_{h} \|^{2} + \frac{Pr \Delta t}{2} \sum_{n = 1}^{N-1} \| \nabla u^{n+1}_{h} \|^{2}
\\ + \frac{Pr \Delta t}{2} \Big(\| \nabla u^{N}_{h} \|^{2} + \| \nabla u^{N-1}_{h} \|^{2}\Big) \leq Ct^{\ast}.
\end{multline*}
\noindent Further,
\begin{equation*}
\beta \Delta t \sum^{N-1}_{n=0} \| p^{n+1}_{h}\| \leq C\sqrt{t^{\ast}}.
\end{equation*}
\end{theorem}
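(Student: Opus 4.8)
The plan is to reproduce the architecture of the proof of Theorem~\ref{t1}, with the backward-Euler polarization identity replaced by the second-order ($G$-stability) identity
\begin{equation*}
\Bigl(\tfrac{3a-4b+c}{2},\,a\Bigr)=\tfrac14\Bigl(\|a\|^{2}+\|2a-b\|^{2}-\|b\|^{2}-\|2b-c\|^{2}+\|a-2b+c\|^{2}\Bigr),
\end{equation*}
applied with $(a,b,c)=(\theta^{n+1}_{h},\theta^{n}_{h},\theta^{n-1}_{h})$ for the temperature and with $(a,b,c)=(u^{n+1}_{h},u^{n}_{h},u^{n-1}_{h})$ for the velocity, where $\theta^{n}_{h}:=T^{n}_{h}-\tau$. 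Telescoping this identity is what produces the combinations $\|T^{N}_{h}\|^{2}+\|2T^{N}_{h}-T^{N-1}_{h}\|^{2}$ (likewise for $u$) and the second-difference sums on the left-hand sides of the claimed estimates, while the startup quantities at $n=0,1$ are carried to the data side. As in Theorem~\ref{t1}, I would first bound the temperature in terms of the velocity gradient and the data, then close the velocity estimate using the discrete Hopf bound~(\ref{tau3}), and finally recover the pressure from the momentum residual through the inf-sup condition~(\ref{infsup}).

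For the temperature, take $S_{h}=\theta^{n+1}_{h}\in W_{h,\Gamma_{1}}$ in~(\ref{scheme:three:temperature}), multiply by $\Delta t$, and apply the $G$-stability identity; the diffusion term gives $\Delta t\|\nabla\theta^{n+1}_{h}\|^{2}$ up to the cross term $\Delta t(\nabla\tau,\nabla\theta^{n+1}_{h})$, handled as in Theorem~\ref{t1}. In both schemes the convective term vanishes by skew-symmetry, $b^{\ast}(\eta(u_{h}),\theta^{n+1}_{h},\theta^{n+1}_{h})=0$ (recall $\eta(u_{h})=u^{n+1}_{h}$ for BDF2 and $\eta(u_{h})=2u^{n}_{h}-u^{n-1}_{h}$ for linearly implicit BDF2), so the only convective contribution is $-\Delta t\,b^{\ast}(\eta(u_{h}),\tau,\theta^{n+1}_{h})$, bounded via Theorem~\ref{l2} by $C\delta\bigl(\epsilon^{-1}\|\nabla\eta(u_{h})\|^{2}+\epsilon\|\nabla\theta^{n+1}_{h}\|^{2}\bigr)$ with $\|\nabla\eta(u_{h})\|^{2}$ controlled by $2\|\nabla u^{n+1}_{h}\|^{2}$ or by $8\|\nabla u^{n}_{h}\|^{2}+2\|\nabla u^{n-1}_{h}\|^{2}$. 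Choosing $\epsilon\sim(C\delta)^{-1}$, estimating $(\gamma^{n+1},\theta^{n+1}_{h})$ by Cauchy-Schwarz-Young, summing over $n=1,\dots,N-1$, and telescoping yields the BDF2 analogue of~(\ref{keyeq}): a bound on $\|\theta^{N}_{h}\|^{2}+\|2\theta^{N}_{h}-\theta^{N-1}_{h}\|^{2}+\sum\|\theta^{n+1}_{h}-2\theta^{n}_{h}+\theta^{n-1}_{h}\|^{2}+\tfrac{\Delta t}{2}\sum\|\nabla\theta^{n+1}_{h}\|^{2}$ by $C\delta^{2}\Delta t\sum\|\nabla u^{n+1}_{h}\|^{2}+\Delta t\sum\|\gamma^{n+1}\|_{-1}^{2}+(\text{startup data})$.

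The velocity estimate is parallel: take $v_{h}=u^{n+1}_{h}\in V_{h}$ in~(\ref{scheme:three:velocity}), apply the $G$-stability identity, kill the convective term by skew-symmetry, and split $\Delta t\,PrRa(\xi(\theta^{n+1}_{h}+\tau),u^{n+1}_{h})$ into a $\theta$-part (Poincar\'{e}-Friedrichs, then Young against the viscous dissipation, contributing $\sim Ra^{2}C_{PF,1}^{2}C_{PF,2}^{2}\|\nabla\theta^{n+1}_{h}\|^{2}$) and a $\tau$-part ($\sim Ra^{2}\|\tau\|_{-1}^{2}$), treating $(f^{n+1},u^{n+1}_{h})$ likewise. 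Summing in $n$ and then substituting the temperature bound converts the $Ra^{2}\Delta t\sum\|\nabla\theta^{n+1}_{h}\|^{2}$ term into $CRa^{2}\delta^{2}\Delta t\sum\|\nabla u^{n+1}_{h}\|^{2}$ plus data, and choosing $\delta=\mathcal{O}(Ra^{-1})$ precisely so this constant is at most $\tfrac14 Pr$ of the available viscous dissipation lets me absorb it. Because $\eta(\cdot)$ reaches back two time levels in the linearly implicit scheme, reindexing the substituted bounds against the velocity dissipation moves $\|\nabla u^{0}_{h}\|^{2}$ and $\|\nabla u^{1}_{h}\|^{2}$ to the data side and retains $\tfrac{Pr\Delta t}{2}(\|\nabla u^{N}_{h}\|^{2}+\|\nabla u^{N-1}_{h}\|^{2})$ on the left, exactly as stated. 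Since $\Delta t\sum_{n}\|\tau\|_{-1}^{2}=t^{\ast}\|\tau\|_{-1}^{2}$ while $\Delta t\sum\|f^{n+1}\|_{-1}^{2}$ and $\Delta t\sum\|\gamma^{n+1}\|_{-1}^{2}$ are bounded independently of $t^{\ast}$, every right-hand side is $\le Ct^{\ast}$; adding the temperature and velocity estimates, rescaling, and using $T^{n}_{h}=\theta^{n}_{h}+\tau$ with the triangle inequality gives the two displayed bounds.

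For the pressure I would mimic the end of the proof of Theorem~\ref{t1}: isolate $\bigl(\tfrac{3u^{n+1}_{h}-4u^{n}_{h}+u^{n-1}_{h}}{2\Delta t},v_{h}\bigr)$ from~(\ref{scheme:three:velocity}), test against $v_{h}\in V_{h}$, and bound its $V_{h}^{\ast}$-norm by $C_{1}\|\nabla\eta(u_{h})\|\|\nabla u^{n+1}_{h}\|+Pr\|\nabla u^{n+1}_{h}\|+PrRaC_{PF,1}\|\eta(T_{h})\|+\|f^{n+1}\|_{-1}$ via Lemma~\ref{l1}, Cauchy-Schwarz, and Poincar\'{e}-Friedrichs; then isolate $(p^{n+1}_{h},\nabla\cdot v_{h})$ from the full momentum equation, bound the right side by the same quantities times $\|\nabla v_{h}\|$, divide, take the supremum over $0\neq v_{h}\in X_{h}$, and invoke~(\ref{infsup}) to control $\beta\|p^{n+1}_{h}\|$ pointwise in $n$. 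Summing in $n$ and applying the discrete Cauchy-Schwarz inequality together with the already-established energy bounds gives the asserted $\beta\Delta t\sum_{n}\|p^{n+1}_{h}\|\le C\sqrt{t^{\ast}}$. I expect the main obstacle to be bookkeeping rather than a conceptual one: making the constants from the $G$-stability identity, from~(\ref{tau3}), and from the Young splittings line up so that (i) the temperature dissipation closes with a clean $\delta^{2}$ multiplying $\Delta t\sum\|\nabla u^{n+1}_{h}\|^{2}$, and (ii) after the substitution into the velocity estimate the residual coefficient is strictly below the viscous dissipation once $\delta=\mathcal{O}(Ra^{-1})$ — the only real subtlety being that in the linearly implicit case the two-step reach of $\eta(\cdot)$ must not spoil the telescoping, which is precisely why the $\|\nabla u^{N}_{h}\|^{2}$ and $\|\nabla u^{N-1}_{h}\|^{2}$ terms are retained on the left-hand side.
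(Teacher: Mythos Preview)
Your proposal is correct and follows essentially the same route as the paper: replace the BDF1 polarization identity by the $G$-stability identity, bound the temperature via the discrete Hopf estimate~(\ref{tau3}) in terms of $\Delta t\sum\|\nabla u_{h}\|^{2}$, feed this into the velocity energy estimate, and absorb the resulting term by choosing $\delta=\mathcal{O}(Ra^{-1})$; the pressure bound is then recovered from the momentum residual and the inf-sup condition exactly as you describe. The paper works out the linearly implicit case in detail (splitting $b^{\ast}(2u^{n}_{h}-u^{n-1}_{h},\tau,\theta^{n+1}_{h})$ and $PrRa(\xi(2\theta^{n}_{h}-\theta^{n-1}_{h}+\tau),u^{n+1}_{h})$ into two pieces each) and declares BDF2 analogous, while you sketch both simultaneously, but the architecture and all the key choices coincide; note also that in Theorem~\ref{t1} the cross term $(\nabla\tau,\nabla\theta^{n+1}_{h})$ is asserted to vanish outright by the construction of $\tau$, so your ``handled as in Theorem~\ref{t1}'' amounts to dropping it.
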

\begin{proof}
We follow the general strategy in Theorem \ref{t1}.  Consider \textbf{linearly implicit BDF2} first.  Let $S_{h} = \theta^{n+1}_{h} \in W_{\Gamma_{1},h}$ in equation (\ref{scheme:four:temperature}) and use the polarization identity.  Multiply by $\Delta t$ on both sides, rewrite all quantities in terms of $\theta^{k}_{h}$, $k = n,\;n+1$, and rearrange. Then,
\begin{multline}\label{stability:four}
\frac{1}{4} \Big\{\|\theta^{n+1}_{h}\|^{2} + \|2\theta^{n+1}_{h} - \theta^{n}_{h}\|^{2}\Big\} - \frac{1}{4} \Big\{\|\theta^{n}_{h}\|^{2} + \|2\theta^{n}_{h} - \theta^{n-1}_{h}\|^{2}\Big\} + \frac{1}{4} \|\theta^{n+1}_{h} - 2\theta^{n}_{h} + \theta^{n-1}_{h}\|^{2}
\\ +  \Delta t \|\nabla \theta^{n+1}_{h}\|^{2} = - \Delta t b^{\ast}(2u^{n}_{h}-u^{n-1}_{h},\tau,\theta^{n+1}_{h}) + \Delta t (\gamma^{n+1},\theta^{n+1}_{h}).
\end{multline}
Consider $-\Delta t b^{\ast}(2u^{n}_{h}-u^{n-1}_{h},\tau,\theta^{n+1}_{h}) = -2\Delta t b^{\ast}(u^{n}_{h},\tau,\theta^{n+1}_{h}) + \Delta t b^{\ast}(u^{n-1}_{h},\tau,\theta^{n+1}_{h})$.  Use Lemma \ref{t2}, then
\begin{align}
-2\Delta t b^{\ast}(u^{n}_{h},\tau,\theta^{n+1}_{h}) &\leq C \,\delta\, \Delta t
\Big( 4\epsilon^{-1}_{6} \| \nabla u^{n}_{h} \|^{2} +  \epsilon_{6} \| \nabla \theta^{n+1}_{h} \|^{2} \Big),
\\ \Delta t b^{\ast}(u^{n-1}_{h},\tau,\theta^{n+1}_{h}) &\leq C \,\delta\, \Delta t
\Big( \epsilon^{-1}_{7} \| \nabla u^{n-1}_{h} \|^{2} +  \epsilon_{7} \| \nabla \theta^{n+1}_{h} \|^{2} \Big).
\end{align}
Use above estimates and (\ref{stability:one:estg}) in equation (\ref{stability:four}).  Let $\epsilon_{6} = \epsilon_{7} = \frac{1}{4C\delta}$ and $\epsilon_{2} = 1/4$.  This leads to
\begin{multline}
\frac{1}{4} \Big\{\|\theta^{n+1}_{h}\|^{2} + \|2\theta^{n+1}_{h} - \theta^{n}_{h}\|^{2}\Big\} - \frac{1}{4} \Big\{\|\theta^{n}_{h}\|^{2} + \|2\theta^{n}_{h} - \theta^{n-1}_{h}\|^{2}\Big\} + \frac{1}{4} \|\theta^{n+1}_{h} - 2\theta^{n}_{h} + \theta^{n-1}_{h}\|^{2}
\\ + \frac{\Delta t}{4} \|\nabla \theta^{n+1}_{h}\|^{2} \leq 16C^{2}\Delta t \,\delta^{2}\,\| \nabla u^{n}_{h} \|^{2} + 4C^{2}\Delta t \,\delta^{2}\, \| \nabla u^{n-1}_{h} \|^{2} + 2\Delta t \|\gamma^{n+1}\|^{2}_{-1}.
\end{multline}
\noindent Sum from $n = 1$ to $n = N-1$ and put all data on the right hand side.  This yields
\begin{multline} \label{keyeq4}
\frac{1}{4} \|\theta^{N}_{h}\|^{2} + \frac{1}{4} \|2\theta^{N}_{h} - \theta^{N-1}_{h}\|^{2} + \frac{1}{4} \sum^{N-1}_{n=1}\|\theta^{n+1}_{h} - 2\theta^{n}_{h} + \theta^{n-1}_{h}\|^{2} + \frac{\Delta t}{4} \sum^{N-1}_{n=1} \|\nabla \theta^{n+1}_{h}\|^{2}
\\ \leq 16C^{2}\Delta t \,\delta^{2}\, \sum^{N-1}_{n=1}\| \nabla u^{n}_{h} \|^{2} + 4C^{2}\Delta t \,\delta^{2}\, \sum^{N-1}_{n=1}\| \nabla u^{n-1}_{h} \|^{2} + 2\Delta t \sum^{N-1}_{n=1}\|\gamma^{n+1}\|^{2}_{-1}
\\ + \frac{1}{4} \|\theta^{0}_{h}\|^{2} + \frac{1}{4} \|\theta^{1}_{h} - \theta^{0}_{h}\|^{2}.
\end{multline}

Now, let $v_{h} = u^{n+1}_{h} \in V_{h}$ in (\ref{scheme:four:velocity}) and use the polarization identity.  Multiply by $\Delta t$ on both sides and rearrange terms.  Then,
\begin{multline} \label{stability:oneu4}
\frac{1}{4} \Big\{\|u^{n+1}_{h}\|^{2} + \|2u^{n+1}_{h} - u^{n}_{h}\|^{2}\Big\} - \frac{1}{4} \Big\{\|u^{n}_{h}\|^{2} + \|2u^{n}_{h} - u^{n-1}_{h}\|^{2}\Big\} + \frac{1}{4} \|u^{n+1}_{h} - 2u^{n}_{h} + u^{n-1}_{h}\|^{2}
\\ + Pr \Delta t \|\nabla u^{n+1}_{h}\|^{2} = \Delta t PrRa(\xi (2\theta^{n}_{h} - \theta^{n-1}_{h} + \tau), u^{n+1}_{h}) + \Delta t (f^{n+1},u^{n+1}_{h}).
\end{multline}
Use the Cauchy-Schwarz-Young and Poincare-Friedrichs inequalities on $\Delta t PrRa(\xi (2\theta^{n}_{h} - \theta^{n-1}_{h} + \tau), u^{n+1}_{h})$,
\begin{align}
2 \Delta t PrRa(\xi \theta^{n}_{h}, u^{n+1}_{h}) &\leq \frac{4\Delta t Pr^{2}Ra^{2}C_{PF,1}^2 C_{PF,2}^{2}}{2 \epsilon_{8}} \| \nabla \theta^{n}_{h} \|^{2} + \frac{\Delta t \epsilon_{8}}{2} \| \nabla u^{n+1}_{h} \|^{2}, \label{stability:one:estT4}
\\ -\Delta t PrRa(\xi \theta^{n-1}_{h}, u^{n+1}_{h}) &\leq \frac{\Delta t Pr^{2}Ra^{2}C_{PF,1}^2 C_{PF,2}^{2}}{2\epsilon_{9}} \| \nabla \theta^{n-1}_{h} \|^{2} + \frac{\Delta t \epsilon_{9}}{2} \| \nabla u^{n+1}_{h} \|^{2}. \label{stability:one:estT5}
\end{align}
Using (\ref{stability:one:esttau}), (\ref{stability:one:estf}), (\ref{stability:one:estT4}), and (\ref{stability:one:estT5}) in (\ref{stability:oneu4}) leads to
\begin{multline*}
\frac{1}{4} \Big\{\|u^{n+1}_{h}\|^{2} + \|2u^{n+1}_{h} - u^{n}_{h}\|^{2}\Big\} - \frac{1}{4} \Big\{\|u^{n}_{h}\|^{2} + \|2u^{n}_{h} - u^{n-1}_{h}\|^{2}\Big\} + \frac{1}{4} \|u^{n+1}_{h} - 2u^{n}_{h} + u^{n-1}_{h}\|^{2}
\\ + Pr \Delta t \|\nabla u^{n+1}_{h}\|^{2} \leq \frac{2\Delta t Pr^{2}Ra^{2}C_{PF,1}^2C_{PF,2}^2}{\epsilon_{8}} \| \nabla \theta^{n}_{h} \|^{2} + \frac{\Delta t Pr^{2}Ra^{2}C_{PF,1}^2C_{PF,2}^2}{2\epsilon_{9}} \| \nabla \theta^{n-1}_{h} \|^{2}
\\ + \frac{\Delta t}{2\epsilon_{4}}\|\tau\|^{2}_{-1} + \frac{\Delta t}{2 \epsilon_{5}} \|f^{n+1} \|^{2}_{-1} + \frac{\Delta t}{2}(\epsilon_{4} + \epsilon_{5} + \epsilon_{8} + \epsilon_{9})\|\nabla u^{n+1}_{h} \|^{2}.
\end{multline*}
Let $2\epsilon_{4} = 2 \epsilon_{5} = \epsilon_{8} = \epsilon_{9} = Pr/2$.  Then,
\begin{multline*}
\frac{1}{4} \Big\{\|u^{n+1}_{h}\|^{2} + \|2u^{n+1}_{h} - u^{n}_{h}\|^{2}\Big\} - \frac{1}{4} \Big\{\|u^{n}_{h}\|^{2} + \|2u^{n}_{h} - u^{n-1}_{h}\|^{2}\Big\} + \frac{1}{4} \|u^{n+1}_{h} - 2u^{n}_{h} + u^{n-1}_{h}\|^{2}
\\ + \frac{Pr \Delta t}{4} \|\nabla u^{n+1}_{h}\|^{2} \leq 4\Delta t Pr Ra^{2}C_{PF,1}^2C_{PF,2}^2 \| \nabla \theta^{n}_{h} \|^{2} + \Delta t Pr^{2}Ra^{2}C_{PF,1}^2C_{PF,2}^2 \| \nabla \theta^{n-1}_{h} \|^{2}
\\ + \frac{2\Delta t}{Pr}\|\tau\|^{2}_{-1} + \frac{2\Delta t}{Pr} \|f^{n+1} \|^{2}_{-1}.
\end{multline*}
\noindent Summing from $n = 1$ to $n = N-1$ and putting all data on r.h.s. yields
\begin{multline} \label{stability:vel4}
\frac{1}{4} \|u^{N}_{h}\|^{2} + \frac{1}{4} \|2u^{N}_{h} - u^{N-1}_{h}\|^{2} + \frac{1}{4} \sum_{n = 1}^{N-1} \|u^{n+1}_{h} - 2u^{n}_{h} + u^{n-1}_{h}\|^{2}
\\ + \frac{Pr \Delta t}{4} \sum_{n = 1}^{N-1} \|\nabla u^{n+1}_{h}\|^{2} \leq \Delta t Pr Ra^{2}C_{PF,1}^2C_{PF,2}^2 \sum_{n = 1}^{N-1} \Big( 4 \| \nabla \theta^{n}_{h} \|^{2} + \| \nabla \theta^{n-1}_{h} \|^{2} \Big)
\\ + \frac{2\Delta t}{Pr} \sum_{n = 1}^{N-1} \Big( \|\tau\|^{2}_{-1} + \|f^{n+1} \|^{2}_{-1} \Big) + \frac{1}{4} \|u^{1}_{h}\|^{2} + \frac{1}{4} \|2u^{1}_{h} - u^{0}_{h}\|^{2}.
\end{multline}
\noindent Now, from equation (\ref{keyeq4}), we have
\begin{multline}\label{stability:coupling4}
\Delta t Pr Ra^{2}C_{PF,1}^2C_{PF,2}^2 \sum_{n = 1}^{N-1} \| \nabla \theta^{n+1}_{h} \|^{2} \leq 64\,C^2 \, C_{PF,1}^2C_{PF,2}^2 Pr Ra^{2} \,\delta^{2}\, \Delta t \sum^{N-1}_{n=0} \Big(\| \nabla u^{n}_{h} \|^{2} + \| \nabla u^{n-1}_{h} \|^{2}\Big)
\\ + 8 Pr Ra^{2}C_{PF,1}^2C_{PF,2}^2 \Delta t \sum^{N-1}_{n=1} \|\gamma^{n+1}\|^{2}_{-1} + Pr Ra^{2}C_{PF,1}^2C_{PF,2}^2 \Big(\|\theta^{1}_{h}\|^{2} + \|2\theta^{1}_{h}-\theta^{0}_{h}\|^{2} \Big).
\end{multline}
Add and subtract $\frac{Pr\Delta t}{8}\sum_{n = 1}^{N-1}\| \nabla u^{n}_{h} \|$ and $\frac{Pr\Delta t}{8}\sum_{n = 1}^{N-1}\| \nabla u^{n-1}_{h} \| $ in (\ref{stability:vel4}) and use the above estimate with $\delta = \frac{1}{16\sqrt{2}\,C\, C_{PF,1}C_{PF,2}} \, Ra^{-1}$.  Then,
\begin{multline} \label{stability:one:vel4}
\frac{1}{4} \|u^{N}_{h}\|^{2} + \frac{1}{4} \|2u^{N}_{h} - u^{N-1}_{h}\|^{2} + \frac{1}{4} \sum_{n = 1}^{N-1} \|u^{n+1}_{h} - 2u^{n}_{h} + u^{n-1}_{h}\|^{2} + \frac{Pr\Delta t}{8}\sum_{n = 1}^{N-1}\| \nabla u^{n+1}_{h} \| + \frac{Pr\Delta t}{8}\| \nabla u^{N}_{h} \|
\\ + \frac{Pr\Delta t}{8}\| \nabla u^{N-1}_{h} \| \leq 8 Pr Ra^{2}C_{PF,1}^2C_{PF,2}^2 \Delta t \sum^{N-1}_{n=1} \|\gamma^{n+1}\|^{2}_{-1} + Pr Ra^{2}C_{PF,1}^2C_{PF,2}^2 \Big(\|\theta^{1}_{h}\|^{2} + \|2\theta^{1}_{h}-\theta^{0}_{h}\|^{2} \Big)
\\ + \frac{2\Delta t}{Pr} \sum_{n = 0}^{N-1} \Big( \|\tau\|^{2}_{-1} + \|f^{n+1} \|^{2}_{-1} \Big)  + \frac{1}{4} \|u^{1}_{h}\|^{2} + \frac{1}{4} \|2u^{1}_{h} - u^{0}_{h}\|^{2} + \frac{Pr\Delta t}{8}\| \nabla u^{1}_{h} \| + \frac{Pr\Delta t}{8}\| \nabla u^{0}_{h} \| 
\end{multline}

The result follows.  Applying similar techniques as in the above and Theorem \ref{t1} yields the result for \textbf{BDF2}.  Pressure stability follows by similar arguments in Theorem \ref{t1}.
\end{proof}

\section{Conclusion}

The coupling terms $b^{\ast}(\eta(u_{h}),T^{n+1}_{h},S_{h})$ and $PrRa(\xi \eta(T),v_{h})$ that arise in stability analyses of FEM discretizations of natural convection problems with sidewall heating are the major source of difficulty.  The former term forces the stability of the temperature approximation to be dependent on the velocity approximation and vice versa for the latter term.  Standard techniques fail to overcome this imposition, in the absence of a discrete Gronwall inequality. \\
\indent The authors introduced a new discrete Hopf interpolant that was able to overcome this issue.  Fully discrete stability estimates were proven which improve upon previous estimates.  In particular, it was shown that provided that the first mesh line in the finite element mesh is within $\mathcal{O} (Ra^{-1})$ of the nonhomogeneous Dirichlet boundary, the velocity, pressure and temperature approximations are stable allowing for sub-linear growth in $t^{\ast}$.\\  
\indent A uniform in time stability estimate was not able to be achieved due to the term $PrRa(\xi \tau,v_{h})$, which arises when an interpolant of the boundary is introduced.  The authors conjecture that the results proven herein may be improved, owing to a gap in the analysis.  \textbf{Open problems include:} Is it possible to improve the current results with a less restrictive mesh condition?  Moreover, can these results be improved to uniform in time stability?  An important next step would be reanalyzing stability for natural convection problems, with sidewall heating, where a turbulence model is incorporated.  
\section*{Acknowledgements}
The authors would like to thank Professor William Layton for suggesting this problem and for many fruitful discussions. Both authors were partially supported by NSF grants DMS 1522267 and CBET 1609120.  Moreover, J.A.F. is supported by the DoD SMART Scholarship.

\end{document}